\numberwithin{equation}{section}
\newtheorem{theorem}{Theorem}[section]
\newtheorem{lemma}[theorem]{Lemma}
\newtheorem{remark}[theorem]{Remark}
\newtheorem{corollary}[theorem]{Corollary}
\newtheorem{conjecture}[theorem]{Conjecture}
\newcommand{\barredsum}{%
  \DOTSB\mathop{\mathpalette\@barredsum\relax}\slimits@
}
\newcommand{\@barredsum}[2]{%
  \begingroup
  \sbox\z@{$#1\sum$}%
  \setlength{\unitlength}{\dimexpr2pt+\ht\z@+\dp\z@\relax}%
  \@barredsumthickness{#1}%
  \vphantom{\@barredsumbar}%
  \ooalign{$\m@th#1\sum$\cr\hidewidth$#1\@barredsumbar$\hidewidth\cr}%
  \endgroup
}
\newcommand{\@barredsumbar}{%
  \vcenter{\hbox{\begin{picture}(0,1)\roundcap\Line(0,0)(0,1)\end{picture}}}%
}
\newcommand{\@barredsumthickness}[1]{
  \linethickness{%
    1.25\fontdimen8
      \ifx#1\displaystyle\textfont\else
      \ifx#1\textstyle\textfont\else
      \ifx#1\scriptstyle\scriptfont\else
      \scriptscriptfont\fi\fi\fi 3
  }%
}
\newcommand{\ZFp}{\mathbb{F}_p}
\newcommand{\de}{\delta}
\newcommand{\e}{\varepsilon}
\newcommand{\La}{\Lambda}
\newcommand{\si}{\sigma}
\newcommand{\wh}{\widehat}
\newcommand{\ZC}{\mathbb{C}}
\newcommand{\ZF}{\mathbb{F}}
 \newcommand{\pt}[1]{\left(#1 \right)}
 \newcommand{\vpt}[1]{\left\lvert #1 \right\rvert}
\begin{document}

\title{A bilinear estimate in $\ZF_p$}

\author{Necef Kavrut} \address{ Necef Kavrut\\  Department of Mathematics\\ California Institute of Technology, USA} \email{nkavrut@caltech.edu}

\author{Shukun Wu} \address{ Shukun Wu\\  Department of Mathematics\\ Indiana University Bloomington, USA} \email{shukwu@iu.edu}

\date{}
\begin{abstract}
We improve an $L^2\times L^2\to L^2$ estimate for a certain bilinear operator in the finite field of size $p$, where $p$ is a prime sufficiently large. Our method carefully picks the variables to apply the Cauchy-Schwarz inequality. As a corollary, we show that there exists a quadratic progression $x,x+y,x+y^2$ for nonzero $y$ inside any subset of $\ZF_p$ of density $\gtrsim p^{-1/8}$.

\end{abstract}

\maketitle

\section{Introduction}
Let $\ZFp$ be the finite field with $p$ elements ($p$ is a prime). We use the following convention throughout the paper: $e_p(x):= e^{-2\pi i\frac{ x}{p}}$ and 
\begin{align*}
    \hat{f}(z) &= \sum_{x\in \ZFp} f(x) e_p(xz)\\
    f(x) &= \frac{1}{p}\sum_{z\in \ZFp} \hat{f}(z) e_p(-xz)\\
    \|f\|_r &= \Big(\sum_{x\in \ZFp} \vpt{f(x)}^r \Big)^{1/r}\\
    \|f\|_{2} &= p^{-1}\|\hat{f}\|_{2} &&\text{(Parseval)}.
\end{align*}
The definitions here are exactly those in \cite{Bourgain-Chang, Dong-Li-Sawin} if we interchange between $\wh f$ and $f$, and if $e_p$ denotes $e^{2\pi i\frac{x}{p}}$ instead.

\smallskip

In this paper, we are interested in the following bilinear operator in $\ZF_p$:
\begin{align}
\label{operator}
    T (f_1,f_2)(s) = \sum_{n\not=s,\, n\in\ZF_p} f_1(s-n) f_2(n) K(s-n,n)
\end{align}
where the kernel $K$ is given as
\begin{equation}
\label{K}
    K(a,b)=\frac{1}{p}\sum_{y\in\ZF_p}e_p\pt{ay^2+by}.
\end{equation}
Note that $K(a,b)$ is a quadratic Gauss sum and can be evaluated via the well-known formula (see for instance \cite{Iwaniec-Kowalski}):
\begin{align}
\label{Gauss-sum}
    K(a,b)= \begin{cases}
        1 \quad &\text{if $a=b=0$}\\[1ex]
        0 \quad &\text{if $a=0$ but $b\neq 0$}\\
        p^{-1/2}\pt{\dfrac{a}{p}}e_p(-b^2 \overline{4a}) \sigma_p \quad &\text{if $a\neq 0$}
    \end{cases} 
\end{align}
where $\pt{\frac{\cdot}{p}}$ is the Legendre symbol, $a\bar a\equiv1\mod p$, and $\si_p$ with $|\sigma_p|=1$ depends only on $p$, hence can be ignored.

Regarding the $L^2$-boundedness of the operator $T$, Bourgain-Chang \cite{Bourgain-Chang} raised the following conjecture: 
\begin{conjecture}
For any $\e>0$, there exists $C_\e$ so that for large enough prime $p$,
\begin{equation}
\label{B-C-conj}
    \|T(f_1,f_2)\|_2\leq C_\e p^{\e-1/2}\|f_1\|_2\|f_2\|_2.
\end{equation}

\end{conjecture}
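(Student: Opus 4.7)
The plan is to convert the conjecture into a bilinear extension-type estimate on the Fourier side and then attempt to prove it through a carefully applied Cauchy--Schwarz that exploits the quadratic phase structure.

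First, substituting the definition \eqref{K} of $K$ into \eqref{operator} and taking the Fourier transform of $T(f_1,f_2)$ in the output variable, a direct calculation gives
\[
\widehat{T(f_1,f_2)}(\xi) = \frac{1}{p}\sum_{y \in \ZFp}\hat f_1(\xi + y^2)\,\hat f_2(\xi + y).
\]
Setting $g_i := \hat f_i$ and applying Parseval, the conjectured bound \eqref{B-C-conj} is equivalent to the bilinear extension-type estimate
\[
\Bigl\|\sum_{y \in \ZFp} g_1(\xi+y^2)\,g_2(\xi+y)\Bigr\|_{L^2_\xi} \leq C_\e\, p^\e\,\|g_1\|_2\|g_2\|_2.
\]
A direct Cauchy--Schwarz in $y$ (using that $y\mapsto y^2$ is at most two-to-one) gives only the weaker bound $\lesssim \sqrt{p}\,\|g_1\|_2\|g_2\|_2$, so the task is to save an additional factor of $\sqrt{p}$ using the geometry of the parabola $\{(y^2,y)\}\subset \ZFp^2$.

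Next, I would square the $L^2$ norm, swap the order of summation, and split the resulting sum over pairs $(y_1,y_2)$ according to whether $y_1=y_2$. After the change of variables $s := y_1-y_2$, $t := y_1+y_2$ (with $p$ odd), the diagonal term reduces, via a shift in $\xi$, to $\sum_y (|g_1|^2 \ast |g_2|^2)(y-y^2)$, which is controllable by an $L^4$-moment or additive-energy argument together with the fact that $y\mapsto y-y^2$ is at most two-to-one. For the off-diagonal $s\neq 0$, another shift in $\xi$ collapses the four-point expression into $\sum_\eta g_1(\eta+st)\overline{g_1(\eta)}\,g_2(\eta+s)\overline{g_2(\eta)}$ against a quadratic phase in $(s,t)$ inherited from the Gauss sum \eqref{Gauss-sum}. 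I would then apply Cauchy--Schwarz in $(s,t)$ after peeling off one pair $g_i\bar g_i$, leaving an inner sum with Gauss-type cancellation amenable to Weil's bound.

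The main obstacle is that the required $\sqrt{p}$-saving sits exactly at the threshold of what Weil/Gauss-sum cancellation can provide, and any inefficiency in the Cauchy--Schwarz step destroys the gain entirely. The conjecture is morally a sharp bilinear restriction estimate for the parabola in $\ZFp^2$, and reaching it likely requires either a structural dichotomy on $g_i$ (separating spectrally concentrated and flat parts, in the spirit of Bourgain--Chang) or finer incidence-geometric input beyond plain Weil cancellation. I therefore expect a direct implementation of the plan to recover only a portion of the missing $\sqrt{p}$, consistent with the abstract's announcement of an \emph{improvement} over the existing bound rather than a full resolution.
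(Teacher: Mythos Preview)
The statement you are attempting to prove is labeled a \emph{Conjecture} in the paper, and the paper does \emph{not} prove it. The paper's contribution is the weaker Theorem~\ref{main}, which establishes \eqref{main-esti} with exponent $p^{-3/16}$ rather than the conjectured $p^{-1/2+\e}$. So there is no ``paper's own proof'' of this statement to compare against; your closing paragraph, where you concede that the plan will only recover part of the missing $\sqrt{p}$, is the correct assessment and in fact matches the state of the art.

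That said, it is worth comparing your sketch to what the paper actually does for Theorem~\ref{main}. Your Fourier-side reformulation and one round of Cauchy--Schwarz followed by Weil/Gauss cancellation is essentially the ``pure analytic'' route the paper outlines in its ending remarks (Section~3.2), which they report yields only the exponent $7/40$. The paper's main argument in Section~2 goes further: it stays on the physical side, applies Cauchy--Schwarz \emph{twice} (first in $(n_1,n_2)$, then in $(x_1,x_3)$), and at each stage reduces the resulting kernel to a multidimensional exponential sum controlled not by Weil's one-variable bound but by the much deeper Theorem~\ref{donglisawin-thm} of Dong--Li--Sawin (square-root cancellation on smooth complete-intersection varieties). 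It is this second Cauchy--Schwarz together with Lemma~\ref{exponential-sum-lem2} that pushes the exponent from $1/8$ to $3/16$. Your plan, relying only on one-dimensional Gauss/Weil cancellation, lacks this ingredient and so, as you anticipate, would stall short even of $3/16$---let alone the full conjecture, which remains open.
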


\noindent It is necessary that the summation in \eqref{operator} excludes $n=s$. Otherwise, by taking $f_1(n)=f_2(n)=\de_0(n)$ one gets $\|T(f_1,f_2)\|_2=1=\|f_1\|_2\|f_2\|_2$.

\smallskip

Bourgain-Chang \cite{Bourgain-Chang} first proved \eqref{B-C-conj} with the decay factor $p^{-1/10}$. This was later improved by Dong-Li-Sawin \cite{Dong-Li-Sawin} to the exponent $p^{-1/8}$, which, to some extent, is the limit of both methods in \cite{Bourgain-Chang} and \cite{Dong-Li-Sawin}.  In this paper, we carefully pick the variables to use the Cauchy-Schwarz inequality and get

\begin{theorem}
\label{main}
Given $f_1,f_2: \ZFp :\to \ZC$ one has
\begin{align}
\label{main-esti}
    \|T(f_1,f_2)\|_2 \lesssim p^{-3/16} \|f_1\|_2 \|f_2\|_2.
\end{align}
\end{theorem}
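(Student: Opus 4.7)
The plan is to pass by duality to the trilinear form
$$S(f_1,f_2,f_3) := \langle T(f_1,f_2), f_3\rangle = \sum_s \overline{f_3(s)}\,T(f_1,f_2)(s)$$
and prove $|S|\lesssim p^{-3/16}\prod_i\|f_i\|_2$. Substituting \eqref{Gauss-sum} and writing $m=s-n$, this reduces (up to the unit-modulus constant $\sigma_p$) to
$$\Big|\sum_{m\neq 0,\,n}\overline{f_3(m+n)}\,f_1(m)\,f_2(n)\,\chi(m)\,e_p(-n^2\overline{4m})\Big|\lesssim p^{5/16}\prod\|f_i\|_2,$$
where $\chi=\left(\frac{\cdot}{p}\right)$ is the Legendre symbol.

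First I apply Cauchy--Schwarz in $m$, pulling out $f_1(m)$, to reduce to controlling $\Sigma := \sum_m|U_m|^2$ with $U_m := \sum_n\overline{f_3(m+n)}f_2(n)e_p(-n^2\overline{4m})$. Expanding $|U_m|^2$ and setting $s_i = m+n_i$, the phase simplifies to $-(s_1^2-s_2^2)\overline{4m}$ modulo a harmless factor $e_p((s_1-s_2)/2)$ that I absorb into a modified $\tilde f_3$ with $\|\tilde f_3\|_2 = \|f_3\|_2$; the diagonal $s_1=s_2$ contributes at most $\|f_2\|_2^2\|f_3\|_2^2$. A second Cauchy--Schwarz over $(s_1,s_2)$ pulls out $\|f_3\|_2^2$ and reduces the problem to bounding $\sqrt E$, where
$$E := \sum_{s_1,s_2}\Big|\sum_{m\neq 0}f_2(s_1-m)\,\overline{f_2(s_2-m)}\,e_p(-(s_1^2-s_2^2)\overline{4m})\Big|^2.$$
The target is $E\lesssim p^{5/4}\|f_2\|_2^4$; the earlier bound $E\lesssim p^{3/2}\|f_2\|_2^4$ of \cite{Dong-Li-Sawin} would yield only $p^{-1/8}$.

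To sharpen to $p^{5/4}$, I would expand $|V|^2$ to introduce $(m_1,m_2)$, so the full phase becomes $-(s_1^2-s_2^2)(\overline{4m_1}-\overline{4m_2})$; after the change of variables $b_i = s_i-m_1$ and $c = m_1-m_2$, the inner exponential sum in $m_1$ is a rational exponential sum on $\ZF_p$ with simple poles at $0$ and $-c$, which by Weil is $O(\sqrt p)$ whenever the generic non-degeneracy holds. The ``careful pick'' of variables enters in the third Cauchy--Schwarz: a naive pairing of the $f_2$-weights against the absolute value of the Weil sum incurs the $\|f_2\|_1\leq\sqrt p\,\|f_2\|_2$ loss and only recovers Dong--Li--Sawin. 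The idea is instead to group the variables so that, on the $f_2$-side, the Cauchy--Schwarz leaves behind an $L^2$-type quantity $\asymp\|f_2\|_2^4$, pairing the Weil savings directly against the $L^2$-mass of $f_2$-translates. This extracts the missing $p^{1/4}$ factor.

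The main obstacle lies exactly in this final Cauchy--Schwarz: the wrong variable choice forces an $\ell^1$--$\ell^\infty$ or $L^1$--$L^2$ H\"older that reintroduces a $\sqrt p$ loss from $\|f_2\|_1$. Identifying the right pairing, and then carefully accounting for the lower-dimensional degenerate strata (such as $s_1=s_2$, $b_1=b_2$, $m_1=m_2$, or configurations where the rational exponential sum becomes singular), is the technical heart of the argument; each such stratum must be shown to contribute at most $p^{5/4}\|f_2\|_2^4$ to $E$.
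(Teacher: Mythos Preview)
Your reduction to the quantity $E$ is correct and, up to the different order of Cauchy--Schwarz and the use of the closed-form Gauss sum \eqref{Gauss-sum} in place of the $y$--variables, lands you at essentially the same object as the paper's $\Lambda_1$ in \eqref{lambda1-simplified}. You also correctly diagnose why the naive bound $|W|=O(\sqrt p)$ on the one--dimensional $m_1$--sum recovers only $p^{-1/8}$.

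The genuine gap is at the step you flag as ``the technical heart of the argument'': you never specify which variables to pair in the third Cauchy--Schwarz, and---more importantly---you never say how the resulting exponential sum is to be estimated. This is not a detail that can be filled in routinely. In the paper's route, the analogous Cauchy--Schwarz (in the variables $(x_1,x_3)$ of \eqref{lambda1-simplified}) produces $\Lambda_2$, whose kernel $K_2$ in \eqref{K2} is a four--variable exponential sum restricted to the quadric $G=0$. The bound $|K_2|\ll p^{-5/2}$ of Lemma~\ref{exponential-sum-lem2} is what delivers the extra $p^{1/4}$, and it is proved not by one--dimensional Weil but by the Deligne/Katz--type Theorem~\ref{donglisawin-thm} on sums over smooth projective varieties. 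Your framework, having collapsed the Gauss sums to rational phases at the outset, would after a further Cauchy--Schwarz face a multi--variable rational exponential sum of comparable depth; a one--dimensional Weil bound on the $m_1$--sum alone cannot substitute for this.

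Indeed, the paper's own ending remark (Section~3.2) addresses exactly this point: an argument that avoids Theorem~\ref{donglisawin-thm} and relies only on one--dimensional character/exponential sum bounds together with iterated Cauchy--Schwarz is worked out there and yields the exponent $7/40$, strictly weaker than $3/16$. So the sentence ``This extracts the missing $p^{1/4}$ factor'' is an assertion rather than an argument, and the evidence is that it is false with the tools you invoke. To reach $3/16$ you must either import a multidimensional square--root cancellation result of the strength of Theorem~\ref{donglisawin-thm}, or exhibit a new mechanism; neither is present in the proposal.
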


As an immediate corollary (see \cite{Bourgain-Chang}), we have the following Roth-type estimate:
\begin{corollary}
For any $|A|\subseteq \ZFp$ with $|A|=\delta p$ and $\delta \gtrsim p^{-1/8}$, there are $\gtrsim p^{13/8}$ triplets $x,x+y,x+y^2 \in A$.
\end{corollary}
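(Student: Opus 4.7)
The plan is to express $N := \sum_{x,\,y\neq 0}\1_A(x)\1_A(x+y)\1_A(x+y^2)$ as a main term plus a trilinear form against the bilinear operator from Theorem~\ref{main}, and then invoke that theorem via Cauchy--Schwarz.

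First I would Fourier-analyze the reflected variant
\begin{equation*}
T^{-}(f_1,f_2)(s) := \sum_{n\neq s} f_1(s-n)\,f_2(n)\,K\bigl(-(s-n),-n\bigr).
\end{equation*}
A direct computation using \eqref{K} and the inversion identity $\widehat{\hat f}(x) = p\,f(-x)$ yields
\begin{equation*}
\widehat{T^{-}(\hat\1_A,\hat\1_A)}(\alpha) = p\sum_y \1_A(y^2-\alpha)\,\1_A(y-\alpha) \,-\, |A|^2,
\end{equation*}
the $-|A|^2$ correction coming from the $n=s$ omission. Pairing by Parseval against $\overline{\widehat{\hat\1_A}(\alpha)} = p\,\1_A(-\alpha)$ and substituting $x = -\alpha$ collapses the sum to $p(N+|A|) - |A|^3$, with the $+|A|$ reflecting the $y=0$ diagonal now included in the triple count. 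Rearranging produces the clean identity
\begin{equation*}
N \;=\; \frac{|A|^3}{p} \,-\, |A| \,+\, \frac{1}{p}\,\langle T^{-}(\hat\1_A,\hat\1_A),\,\hat\1_A\rangle.
\end{equation*}

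Next I would verify that Theorem~\ref{main} applies to $T^{-}$ with the same decay. From \eqref{Gauss-sum} one checks $K(-a,-b) = c\,\overline{K(a,b)}$ for a unimodular $c = \bigl(\tfrac{-1}{p}\bigr)\sigma_p^2$ depending only on $p$; consequently $T^{-}(f_1,f_2) = c\,\overline{T(\overline{f_1},\overline{f_2})}$, so $\|T^{-}(f_1,f_2)\|_2 \lesssim p^{-3/16}\|f_1\|_2\|f_2\|_2$. Cauchy--Schwarz in $s$ together with Parseval $\|\hat\1_A\|_2 = (p|A|)^{1/2}$ then produces
\begin{equation*}
\Bigl|\tfrac{1}{p}\,\langle T^{-}(\hat\1_A,\hat\1_A),\,\hat\1_A\rangle\Bigr| \;\lesssim\; \frac{p^{-3/16}}{p}\,\|\hat\1_A\|_2^{3} \;=\; p^{5/16}\,|A|^{3/2} \;=\; p^{29/16}\,\delta^{3/2}.
\end{equation*}

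Finally, the main term $|A|^3/p = \delta^3 p^2$ dominates this error precisely when $\delta^{3/2}p^{3/16}\gtrsim 1$, i.e.\ $\delta\gtrsim p^{-1/8}$, while the linear correction $|A|=\delta p$ is negligible by comparison in the same range (since then $\delta^2 p \gtrsim p^{3/4}$). Under the hypothesis of the corollary we therefore conclude $N \gtrsim \delta^3 p^2 \gtrsim p^{-3/8}\cdot p^2 = p^{13/8}$, as claimed. The only delicate step is establishing the clean Fourier identity above; the sign-reflection twist in $K$ that forces the use of $T^{-}$ rather than $T$ is absorbed by the unimodular scaling, and the rest is routine Fourier bookkeeping in $\ZFp$.
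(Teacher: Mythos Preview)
Your argument is correct and is exactly the standard reduction the paper is pointing to when it writes ``As an immediate corollary (see \cite{Bourgain-Chang})'' without supplying a proof. The Fourier identity you derive,
\[
N \;=\; \frac{|A|^3}{p} - |A| + \frac{1}{p}\,\langle T^{-}(\hat\1_A,\hat\1_A),\hat\1_A\rangle,
\]
is right (one checks it directly by Fourier-expanding $\1_A(x)\1_A(x+y)\1_A(x+y^2)$, summing in $x$ to force the frequency constraint, and then isolating the diagonal $n=s$ contribution, which produces $\hat\1_A(0)^3=|A|^3$). Your observation that $K(-a,-b)=\bigl(\tfrac{-1}{p}\bigr)\sigma_p^{2}\,\overline{K(a,b)}$, hence $\|T^{-}(f_1,f_2)\|_2=\|T(\overline{f_1},\overline{f_2})\|_2$, is also correct and lets Theorem~\ref{main} apply verbatim. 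The numerics $p^{-1}\cdot p^{-3/16}\|\hat\1_A\|_2^{3}=p^{29/16}\delta^{3/2}$ and the comparison with $\delta^3 p^2$ are accurate, giving the threshold $\delta\gtrsim p^{-1/8}$ and the count $\gtrsim p^{13/8}$.

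In short, there is nothing to compare: the paper defers this step entirely to \cite{Bourgain-Chang}, and you have reproduced that deferred argument.
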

\noindent See also \cite{Peluse} for another approach to attack the polynomial Roth-type problems, and \cite{Bourgain, Chen-Guo-Li} for similar results on the real line.

\smallskip

To prove Theorem \ref{main}, we need the following deep theorem about multidimensional exponential sum:

\begin{theorem}[\cite{Dong-Li-Sawin} Theorem 3.1]
\label{donglisawin-thm}
Let $F, G\in\ZF_p[X_1, \ldots, X_4]$. Assume that the degree of $F$ is indivisible by $p$, the homogeneous leading term of $G$ defines a smooth projective hypersurface, and the homogeneous leading terms of $G$ and that of $F$ together define a smooth co-dimension-2
variety in the projective space. Then
\begin{equation}
    \sum_{\substack{z_1,\ldots,z_4,\\G=0}}e_p(F)\ll p^{3/2}.
\end{equation}
\end{theorem}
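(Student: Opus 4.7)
The plan is to pass to $\ell$-adic cohomology: the Grothendieck--Lefschetz trace formula turns the exponential sum over $\{G=0\} \subset \mathbb{A}^4_{\ZF_p}$ into an alternating trace of Frobenius on compactly supported cohomology, and Deligne's main theorem from Weil~II then delivers square-root cancellation once one knows the cohomology is concentrated in the middle degree. Let $V \subset \mathbb{A}^4_{\ZF_p}$ be the affine variety $\{G=0\}$, which is geometrically irreducible of dimension $3$ because the leading form of $G$ is smooth. Let $\bar V \subset \mathbb{P}^4_{\ZF_p}$ denote its projective closure (smooth along the hyperplane at infinity $H_\infty$ by the same hypothesis), let $\mathcal{L}_\psi$ be the Artin--Schreier sheaf on $\mathbb{A}^1$ attached to the character $e_p$, and set $\mathcal{F} := F^*\mathcal{L}_\psi$, a rank-one lisse $\overline{\mathbb{Q}}_\ell$-sheaf on $V$. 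The Grothendieck--Lefschetz formula gives
\begin{equation*}
\sum_{\substack{z_1,\ldots,z_4 \\ G=0}} e_p(F) \;=\; \sum_{i=0}^{6} (-1)^i \, \mathrm{Tr}\bigl(\mathrm{Frob}_p \mid H^i_c(V_{\overline{\ZF_p}}, \mathcal{F})\bigr),
\end{equation*}
and by Deligne's theorem each Frobenius eigenvalue on $H^i_c(V, \mathcal{F})$ has complex absolute value at most $p^{i/2}$.

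It therefore suffices to prove the vanishing $H^i_c(V, \mathcal{F}) = 0$ for every $i > 3 = \dim V$, together with a bound $\dim H^3_c(V, \mathcal{F}) = O(1)$ that is uniform in $p$. The two smoothness hypotheses are exactly tuned for this. The assumption that the leading forms of $G$ and $F$ jointly cut out a smooth codimension-$2$ curve in $H_\infty \cong \mathbb{P}^3$ means the polar divisor of $F|_{\bar V}$ meets the boundary $\bar V \cap H_\infty$ transversally in a smooth curve; combined with $\deg F \not\equiv 0 \pmod p$, which pins the Swan conductor of $\mathcal{F}$ at infinity to its generic value and forces the local monodromy along each boundary component to be totally wild, no unipotent contributions can persist in cohomology above the middle degree. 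A stratified Lefschetz hyperplane argument through a generic pencil (equivalently, the Katz framework for exponential sums on smooth complete intersections, as refined in the Fouvry--Katz estimates) then yields both the vanishing and the desired $p$-uniform bound on $\dim H^3_c(V, \mathcal{F})$, completing the $\ll p^{3/2}$ estimate.

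The main obstacle is precisely this last step: translating the two smoothness conditions into concrete $\ell$-adic vanishing. One must control the nearby and vanishing cycles of the homogenization of $G$ and of $F$ along $\bar V \cap H_\infty$, and rule out unipotent contributions in degrees $4, 5, 6$. Katz-style vanishing theorems for exponential sums are designed for exactly this purpose — the smoothness of the leading forms is precisely the hypothesis that places the singular support of the relevant perverse sheaf in the expected position — but verifying those hypotheses cleanly in this four-variable setting, and keeping the resulting Betti-number bounds independent of $p$, is where essentially all of the technical work lies.
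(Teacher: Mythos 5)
This statement is not proved in the paper at all: it is imported verbatim as Theorem 3.1 of Dong--Li--Sawin and used as a black box, so there is no internal proof to compare against. Your sketch does follow the route that Dong--Li--Sawin themselves take (Grothendieck--Lefschetz trace formula for the Artin--Schreier sheaf $F^*\mathcal{L}_\psi$ restricted to $\{G=0\}$, Deligne's Weil~II for the weight bound $p^{i/2}$ on $H^i_c$, and then cohomological vanishing above the middle degree plus a $p$-uniform bound on the middle Betti number), so the strategy is the right one and correctly identifies where each hypothesis enters: smoothness of the leading form of $G$ controls the boundary of the projective closure, the joint smoothness of the two leading forms controls the divisor at infinity of $F$ restricted to that closure, and $p \nmid \deg F$ governs the wildness at infinity.

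However, as written this is an outline rather than a proof, and you concede the decisive point yourself in the final paragraph. The entire content of the theorem is the assertion that $H^i_c(V_{\overline{\ZF_p}}, F^*\mathcal{L}_\psi)=0$ for $i=4,5,6$ together with $\dim H^3_c = O(1)$ uniformly in $p$; neither is established. Note that Artin's affine vanishing theorem and Poincar\'e duality only give $H^i_c=0$ for $i<3$ on the smooth locus --- vanishing \emph{above} the middle degree for compactly supported cohomology of an affine threefold is genuinely false in general and must be extracted from the specific geometry at infinity. Your intermediate assertions en route to it are also imprecise: ``the local monodromy along each boundary component is totally wild, so no unipotent contributions persist above the middle degree'' is a conclusion one must prove (via a computation of nearby/vanishing cycles along $\bar V\cap H_\infty$, or via Katz's weak Lefschetz and semicontinuity arguments for exponential sums on complete intersections), not a consequence one can read off from the hypotheses in one line. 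Likewise, the claim that $V$ itself is smooth is not among the hypotheses --- only smoothness near the hyperplane at infinity follows from the stated conditions, and the argument must be arranged so that possible singularities in the affine part do not contribute above degree $3$. Until those cohomological statements are actually verified (which is the substance of Dong--Li--Sawin's Section~3), the proposal does not constitute a proof.
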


\begin{remark}
\rm

With a pure analytic method, we can prove Theorem \ref{main} for a weaker exponent, which still leads to an improvement over the previous result in \cite{Dong-Li-Sawin}. This will be discussed in the ending remark.

\end{remark}

\medskip

Throughout the paper, we use $a\lesssim b$ to represent $a\leq Cb$ for some unimportant constant $C$. 

\medskip

\noindent
{\bf Acknowlegement}. The second author is grateful to Michael Larsen for pointing out the square-root upper bound for \eqref{one-dim-sum} for most cases. The first author is grateful to Caltech Student Faculty Programs Office and the Math Department for the Summer Undergraduate Research Fellowship that funded this research. \\

\section{Proof of Theorem \ref{main}}

Square out $\|T (f_1,f_2)\|_2^2$ to get $\|T (f_1,f_2)\|_2^2=$
\begin{align}
\label{pre-1}
    &\sum_{n_1\not=s}\sum_{s,n_2}f_1(s-n_1)f_2(n_1)\overline{f_1(s-n_2)}\overline{f_2(n_2)}K(s-n_1,n_1)\overline{K(s-n_2,n_2)}\\ 
    \label{pre-2}
    &-\sum_{n_1\not=s}\sum_sf_1(s-n_1)f_2(n_1)\overline{f_1(0)}\overline{f_2(s)}K(s-n_1,n_1)\overline{K(0,s)}.
\end{align}
\eqref{pre-1} will be our main term. As for the error term $\eqref{pre-2}$, note that by \eqref{Gauss-sum}, $K(0,s)=0$ unless $s=0$. Hence
\begin{align}
\label{pre-2-sol}
    \eqref{pre-2}=\sum_{n_1\not=0}f_1(-n_1)f_2(n_1)\overline{f_1(0)}\overline{f_2(0)}K(-n_1,n_1)\leq q^{-1/2}\|f_1\|_2^2\|f_2\|_2^2,
\end{align}
which is better than what we claim in Theorem \ref{main}.

\smallskip

As for the main term \eqref{pre-1}, by Cauchy-Schwarz on the variables $n_1,n_2$, 
\begin{equation}
\label{lambda1}
    \eqref{pre-1}\leq \|f_2\|_2^2\cdot|\La_1(f_1)|^{1/2},
\end{equation}
where $\La_1(f)$ is define as 
\begin{equation}
\label{lambda-1-pre}
    \La_1(f)=\sum_{\substack{n_1\not=s_1,\\n_1\not=s_2}}f(s_1-n_1)\overline{f(s_2-n_1)}\overline{f(s_1-n_2)}f(s_2-n_2)H_1(s_1,s_2,n_1,n_2),
\end{equation}
with $H_1$ being given by
\begin{equation}
\label{H1}
    H_1(s_1, s_2, n_1, n_2)=K(s_1-n_1,n_1)\overline{K(s_1-n_2,n_2)}\overline{K(s_2-n_1,n_1)}K(s_2-n_2,n_2).
\end{equation}
We consider two separate cases for the summation of \eqref{lambda-1-pre}: Write $\eqref{lambda-1-pre}=$
\begin{align}
\label{lambda-1-1}
    &\sum_{n_1\not=s}\sum_{s,n_2}f(s-n_1)\overline{f(s-n_1)}\overline{f(s-n_2)}f(s-n_2)H_1(s,s,n_1,n_2)\\ \label{lambda-1-2}
    &+\sum_{\substack{n_1\not=s_1,\\n_1\not=s_2}}\sum_{s_1\not=s_2}f(s_1-n_1)\overline{f(s_2-n_1)}\overline{f(s_1-n_2)}f(s_2-n_2)H_1(s_1,s_2,n_1,n_2).
\end{align}

Again, \eqref{lambda-1-2} will be our main term. As for \eqref{lambda-1-1}, note that if in addition $n_2\not=s$, by \eqref{Gauss-sum} we can bound $H_1$ from above as $|H_1|\leq p^{-2}$; if $n_2=s$, by \eqref{Gauss-sum}, $K(0,s)=0$ unless $s=0$. Hence
\begin{align}
\label{lambdda-1-1-est}
    \eqref{lambda-1-1}&\leq p^{-2}\sum_{n_1\not=s,n_2\not=s}\sum_{s}|f(s-n_1)\overline{f(s-n_1)}\overline{f(s-n_2)}f(s-n_2)|\\ 
    &+p^{-1}\sum_{n_1\not=0}|f(-n_1)\overline{f(-n_1)}\overline{f(0)}f(0)|\lesssim p^{-1}\|f\|_2^2\|g\|_2^2.
\end{align}

\smallskip

Now let us look at the main term \eqref{lambda-1-2}. A side case for \eqref{lambda-1-2} is (take $n_1=s_2$)
\begin{align}
    \sum_{s_1\not=s_2}f(s_1-s_2)\overline{f(0)}\overline{f(s_1-n_2)}f(s_2-n_2)H_1(s_1,s_2,s_2,n_2),
\end{align}
which, since $K(0,s_2)=0$ unless $s_2=0$, equals to
\begin{equation}
\label{equality-side}
    \sum_{s_1\not=0}\sum_{n_2}f(s_1)\overline{f(0)}\overline{f(s_1-n_2)}f(-n_2)H_1(s_1,0,0,n_2).
\end{equation}
Note that $|H_1(s_1,0,0,n_2)|\leq p^{-3/2}$ if in addition $n_2(s_1-n_2)\not=0$. If $n_2(s_1-n_2)=0$, then since $s_1\not=0$, we have either $n_2\not=0$ or $s_1-n_2\not=0$, which gives $|H_1(s_1,0,0,n_2)|\leq p^{-1}$. Therefore, by H\"older's inequality, 
\begin{align}
    \eqref{equality-side}\leq &\,\, p^{-3/2}\sum_{s_1\not=0}\sum_{n_2(s_1-n_2)\not=0}f(s_1)\overline{f(0)}\overline{f(s_1-n_2)}f(-n_2)\\
    &+p^{-1}\sum_{s_1\not=0}\sum_{n_2(s_1-n_2)=0}f(s_1)\overline{f(0)}\overline{f(s_1-n_2)}f(-n_2)\\ \label{last-side-esti}
    \lesssim &\,\, p^{-1}\|f\|_2^2\|g\|_2^2.
\end{align}
Finally, we can express our main term \eqref{lambda-1-2} as
\begin{align}
    &\eqref{lambda-1-2}=-2\cdot \eqref{equality-side}\\ 
    \label{lambda-1-refined}
    &+\sum_{n_1,n_2}\sum_{s_1\not=s_2}f(s_1-n_1)\overline{f(s_2-n_1)}\overline{f(s_1-n_2)}f(s_2-n_2)H_1(s_1,s_2,n_1,n_2).
\end{align}
We will estimate \eqref{lambda-1-refined} in the rest of the paper.

\smallskip

The key observation here is that the four vectors $s_1-n_1$, $s_2-n_1$, $s_1-n_2$, and $s_2-n_2$ only span a three-dimensional space. After a change of variables $x_1=s_1-n_1$, $x_2=s_1-n_2$, $x_3=s_2-n_1$, and $x_4=n_2$ (so $s_1\not=s_2$ is equivalent to $x_1\not=x_3$),
\begin{align}
\nonumber
    \eqref{lambda-1-refined}=\sum_{x_1\not=x_3}&f(x_1)\overline{f(x_3)}\overline{f(x_2)}f(x_2+x_3-x_1)\\ \label{sum-H1}
    &\sum_{x_4}H_1(x_2+x_4,\,x_2+x_3+x_4-x_1,\,x_2+x_4-x_1,\,x_4).
\end{align}
Recall \eqref{H1} for $H_1$ and $\eqref{K}$ for $K$. We expand \eqref{sum-H1} as
\begin{align}
\nonumber
    \eqref{sum-H1}=p^{-4}\sum_{x_4}\sum_{y_1,\ldots, y_4}e_p(Q_1)=p^{-3}\sum_{y_1,y_2,y_3}e_p(R_1),
\end{align}
where $Q_1$ equals to
\begin{equation}
\nonumber
    x_1y_1^2+(x_2+x_4-x_1)y_1-x_2y_2^2-x_4y_2-x_3y_3^2-(x_2+x_4-x_1)y_3+(x_3+x_2-x_1)y_4^2+x_4y_4,
\end{equation}
and 
\begin{equation}
\label{R1}
    R_1=x_1y_1^2-x_2y_2^2-x_3y_3^2+(x_3+x_2-x_1)(y_2+y_3-y_1)^2+(x_2-x_1)(y_1-y_3).
\end{equation}
This gives the simplification 
\begin{align}
\label{lambda1-simplified}
    \eqref{lambda-1-refined}=\sum_{x_1\not=x_3}&f(x_1)\overline{f(x_3)}\overline{f(x_2)}f(x_2+x_3-x_1)K_1(x_1,x_2,x_3),
\end{align}
where
\begin{align}
\label{K1}
    K_1&(x_1,x_2,x_3):=p^{-3}\sum_{y_1,y_2,y_3}e_p(R_1).
\end{align}

\begin{lemma}
\label{Exponential-1}
If $(x_3+x_2)(x_2-x_1)(x_3-x_1)\not=0$ then
\begin{equation}
    |K_1(x_1,x_2,x_3)|\ll p^{-3/2}.
\end{equation}
\end{lemma}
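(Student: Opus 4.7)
The plan is to show that under the assumption $(x_3+x_2)(x_2-x_1)(x_3-x_1)\neq 0$, the quadratic form in $(y_1,y_2,y_3)$ inside the phase $R_1$ is non-degenerate over $\ZFp$, so the inner sum is a $3$-variable non-degenerate Gauss sum of modulus exactly $p^{3/2}$, which together with the $p^{-3}$ prefactor in \eqref{K1} gives $|K_1|=p^{-3/2}$.

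First I would expand $(y_2+y_3-y_1)^2$ in \eqref{R1}. Setting $d := x_3+x_2-x_1$, this splits $R_1 = Q(y_1,y_2,y_3) + L(y_1,y_2,y_3)$, where the linear part is $L=(x_2-x_1)(y_1-y_3)$ and the quadratic part is
\begin{equation*}
Q(y_1,y_2,y_3) = (x_2+x_3)\, y_1^2 + (x_3-x_1)\, y_2^2 + (x_2-x_1)\, y_3^2 - 2d\, y_1 y_2 - 2d\, y_1 y_3 + 2d\, y_2 y_3.
\end{equation*}
The symmetric matrix of $Q$ is therefore
\begin{equation*}
A = \begin{pmatrix} x_2+x_3 & -d & -d \\ -d & x_3-x_1 & d \\ -d & d & x_2-x_1 \end{pmatrix}.
\end{equation*}
Expanding $\det A$ along the first row and using the identity $2d - (x_3-x_1) - (x_2-x_1) = x_2+x_3$, the off-diagonal contributions collapse cleanly and one obtains
\begin{equation*}
\det A = (x_2+x_3)(x_3-x_1)(x_2-x_1).
\end{equation*}
Under the hypothesis of the lemma this is a unit in $\ZFp$, so $Q$ is non-degenerate.

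Finally, since $p$ is odd and $A$ is invertible modulo $p$, completing the square via the shift $y \mapsto y - \tfrac12 A^{-1} \ell$ (where $\ell$ is the coefficient vector of $L$) reduces $\sum_{y_1,y_2,y_3} e_p(R_1)$ to a unit modulus phase times the pure quadratic Gauss sum $\sum_y e_p(Q(y))$. Diagonalizing $Q$ over $\ZFp$ then factorizes this into a product of three one-variable quadratic Gauss sums, each of modulus $\sqrt{p}$ by the explicit formula \eqref{Gauss-sum}. Hence $|\sum_{y_1,y_2,y_3}e_p(R_1)| = p^{3/2}$, and dividing by $p^3$ gives $|K_1(x_1,x_2,x_3)|=p^{-3/2}$. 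The only genuinely non-routine step is the determinant identity; once non-degeneracy is in hand, everything reduces to the standard one-variable Gauss sum evaluation already recorded in \eqref{Gauss-sum}.
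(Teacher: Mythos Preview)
Your proof is correct, and the core computation coincides with the paper's: both arrive at the same symmetric matrix $A$ for the quadratic part of $R_1$ and the same factorization $\det A=(x_2+x_3)(x_3-x_1)(x_2-x_1)$, which is exactly the product in the hypothesis.

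The difference is in how the square-root bound is extracted once non-degeneracy is known. The paper, having checked that $\nabla(R_1)_2$ vanishes only at the origin, appeals to Theorem~\ref{donglisawin-thm}. You instead exploit the fact that $R_1$ is genuinely quadratic in $(y_1,y_2,y_3)$: complete the square to kill the linear part, diagonalize over $\ZF_p$, and reduce to a product of three one-variable Gauss sums, each of modulus $\sqrt{p}$ by \eqref{Gauss-sum}. This is more elementary and in fact yields the exact value $|K_1|=p^{-3/2}$ rather than merely $\ll p^{-3/2}$. The paper's invocation of Theorem~\ref{donglisawin-thm} is overkill at this stage (and, as literally stated, that theorem is for four variables with a hypersurface constraint, so the appeal is somewhat informal); the heavy machinery only becomes essential later, for the kernel $K_2$ in Lemma~\ref{exponential-sum-lem2}, where the phase is no longer quadratic and no closed-form evaluation is available.
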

\begin{proof}
Let $(R_{1})_2$ be the homogeneous leading term of $(R_1)_2$. Calculate directly  
\begin{align}
\label{matrix-1}
    \nabla (R_{1})_2/2=
     (y_1,y_2,y_3)\left(\begin{array}{ccc} x_3+x_2 & x_1-x_2-x_3 & x_1-x_2-x_3\\
    x_1-x_2-x_3 & x_3-x_1 & x_3+x_2-x_1\\
    x_1-x_2-x_3 & x_3+x_2-x_1 & x_2-x_1
    \end{array}\right).
\end{align}
Denote by $A$ the 3 by 3 matrix appearing in \eqref{matrix-1}. For $\nabla (R_{1})_2=0$ when $(y_1,y_2,y_3)\not=0$, we need $\det(A)=(x_3+x_2)(x_2-x_1)(x_3-x_1)=0$, which contradicts to our assumption. Hence $\nabla (R_{1})_2\not=0$ when $(y_1,y_2,y_3)\not=0$, and the lemma follows from Theorem \ref{donglisawin-thm}.
\end{proof}

Among other things, lemma \eqref{Exponential-1} gives \eqref{main-esti} for $p^{-1/8}$, which is the exponent obtained in \cite{Dong-Li-Sawin}. To get a better result, one can try to use the oscillation inside the kernel $K_1$, and this is what we are going to do next.

\medskip

By Cauchy-Schwarz on the variables $(x_1, x_3)$, 
\begin{equation}
\label{lambda1-lambda2}
    |\eqref{lambda1-simplified}|\leq \|f\|_2^2 \cdot|\La_2(f)|^{1/2},
\end{equation}
where, with $x_2, x_4$ being two copies of $x_2$, $\La_2(f)$ is defined as
\begin{align}
\label{lambda2}
    \sum_{x_1\not=x_3}f(x_4)\overline{f(x_2)}\overline{f(x_4+x_3-x_1)}f(x_2+x_3-x_1)H_2(x_1,x_2,x_3,x_4)
\end{align}
with $H_2$ being given by 
\begin{equation}
\label{H2}
    H_2(x_1,x_2,x_3,x_4)=K_1(x_1,x_2,x_3)\overline{K_1(x_1,x_4,x_3)}.
\end{equation}

\smallskip

Similar to before, we consider two separate cases in the summation of \eqref{lambda2} ($x_2=x_4$ and $x_2\not=x_4$). Writes $\eqref{lambda2}=$
\begin{align}
\label{H2-case1}
    &
    \sum_{x_1\not=x_3}f(x_2)\overline{f(x_2)}\overline{f(x_2+x_3-x_1)}f(x_2+x_3-x_1)H_2(x_1,x_2,x_3,x_2)\\ \label{H2-case2}
    &+\sum_{\substack{x_1\not=x_3,\\x_2\not=x_4}}f(x_4)\overline{f(x_2)}\overline{f(x_4+x_3-x_1)}f(x_2+x_3-x_1)H_2(x_1,x_2,x_3,x_4).
\end{align}

For the side case \eqref{H2-case1}, note that $H_2(x_1,x_2,x_3,x_2)=|K|^2$, and $x_3-x_1\not=0$ is given already. If $(x_3+x_2)(x_2-x_1)(x_3-x_1)\not=0$, Lemma \ref{Exponential-1} yields $|H_2|\leq p^{-3}$. Plug this back to \eqref{H2-case1} so that\begin{equation}
\label{final1}
    |\eqref{H2-case1}|\leq p^{-2}\|f\|_2^4.
\end{equation}
Suppose $(x_3+x_2)(x_2-x_1)(x_3-x_1)=0$. If $x_3+x_2=0$, 
\begin{align}
\nonumber
    |K_1|&=p^{-3}\Big|\sum_{y_j}e_p(x_1y_1^2-x_2y_2^2+x_2y_3^2-x_1(y_2+y_3-y_1)^2+(x_2-x_1)(y_1-y_3))\Big|\\ \nonumber
    &=p^{-3}\Big|\sum_{y_j}e_p(x_1(y_1+y_3)^2-x_2y_2^2+x_2y_3^2-x_1(y_2-y_1)^2+(x_2-x_1)y_1))\Big|\\ \nonumber
    &\leq p^{-1}.
\end{align}
In the last inequality, we use \eqref{Gauss-sum}, and the fact that $x_1+x_2\not=0$, which is a consequence of $x_3+x_2=0$ and $x_3-x_1\not=0$.

If $x_1-x_2=0$, then
\begin{align}
\nonumber
    |K_1|&=p^{-3}\Big|\sum_{y_j}e_p(x_1y_1^2-x_1y_2^2-x_3y_3^2+
    x_3(y_2+y_3-y_1)^2)\Big|\\ \nonumber
    &=p^{-3}\Big|\sum_{y_j}e_p(x_1y_1^2-x_1y_2^2+x_3(y_2-y_1)^2+2x_3(y_2-y_1)y_3)\Big|\leq p^{-1}.
\end{align}
The last inequality comes from the case study for the cases $x_3(y_2-y_1)\not=0$, $y_2-y_1=0$, and $x_3=0$, where we also need \eqref{Gauss-sum} and $x_3-x_1\not=0$.

Thus, in either case, $|K_1|\leq p^{-1}$, which gives
\begin{equation}
\label{H2-estimate}
    |H_2|\leq p^{-2}.
\end{equation}
Plug this back to \eqref{H2-case1} to get if $(x_3+x_2)(x_1-x_2)=0$, then \begin{equation}
\label{final2}
    |\eqref{H2-case1}|\leq p^{-2}\|f\|_2^4.
\end{equation}

\smallskip

Now let us return to the main case \eqref{H2-case2}. Perform the change of variables $u_1=x_4$, $u_2=x_2$, $u_3=x_3-x_1$, and $u_4=x_1$ so that
\begin{align}
\nonumber
    \eqref{H2-case2}=&\sum_{(u_1-u_2)u_3\not=0}f(u_1)\overline{f(u_2)}\overline{f(u_1+u_3)}f(u_2+u_3)\\ \label{sum-H2}
    &\sum_{u_4}H_2(u_4,u_2,u_3+u_4,u_1).
\end{align}
Again, expand the sum \eqref{sum-H2} to get 
\begin{align}
\nonumber
    \eqref{sum-H2}&=p^{-6}\sum_{u_4}\sum_{y_1,\ldots,y_6}e_p(Q_2)\\  \label{Gauss-sum-app}
    &= p^{-4}\sum_{\substack{y_1,y_3,y_4,y_6\\ 
    G=0}}e_p(R_2).
\end{align}
where $Q_2$ equals to
\begin{align}
\nonumber
    &u_4y_1^2-u_2y_2^2-(u_3+u_4)y_3^3+(u_2+u_3)(y_2+y_3-y_1)^2+(u_2-u_4)(y_1-y_3)\\ \nonumber
    &-[u_4y_4^2-u_1y_5^2+(u_3+u_4)y_6^3+(u_1+u_3)(y_5+y_6-y_4)^2+(u_1-u_4)(y_4-y_6)]\\ \nonumber
    =&\,\, u_4[y_1^2-y_3^2-y_4^2+y_6^2-y_1+y_3+y_4-y_6]\\ \nonumber
    &\,\,+[u_3y_2^2+2(y_3-y_1)(u_2+u_3)y_2]-[u_3y_5^2+2(y_6-y_4)(u_1+u_3)y_5]\\ \nonumber
    &\,\,+u_2y_3^2+(u_2+u_3)y_1^2-2(u_2+u_3)y_1y_3+u_2(y_1-y_3)\\ \nonumber
    &\,\,-[u_1y_6^2+(u_1+u_3)y_4^2-2(u_1+u_3)y_4y_6+u_1(y_4-y_6)],
\end{align}
$R_2$ is given by
\begin{align}
    R_2=&\,-\bar u_3(y_3-y_1)^2(u_2+u_3)^2+\bar u_3(y_6-y_4)^2(u_1+u_3)^2\\ \nonumber
    &\,+u_2y_3^2+(u_2+u_3)y_1^2-2(u_2+u_3)y_1y_3+u_2(y_1-y_3)\\ \nonumber
    &\,-[u_1y_6^2+(u_1+u_3)y_4^2-2(u_1+u_3)y_4y_6+u_1(y_4-y_6)],
\end{align}
and
\begin{equation}
\nonumber
    G=y_1^2-y_3^2-y_4^2+y_6^2-y_1+y_3+y_4-y_6.
\end{equation}
Note that in \eqref{Gauss-sum-app} we use \eqref{Gauss-sum}.

\smallskip

Therefore, we end up with the simplification
\begin{align}
\label{lambda2-simplified}
    \eqref{H2-case2}=\sum_{(u_1-u_2)u_3\not=0}f(u_1)\overline{f(u_2)}\overline{f(u_1+u_3)}f(u_2+u_3)K_2(u_1,u_2,u_3),
\end{align}
where 
\begin{equation}
\label{K2}
    K_2(u_1,u_2,u_3):=p^{-4}\sum_{\substack{y_1,y_3,y_4,y_6\\ 
    G=0}}e_p(R_2).
\end{equation}
\begin{lemma}
\label{exponential-sum-lem2}
Fix $u_1$ and $u_2$. If $(u_1-u_2)u_3\not=0$, then for all but $O(1)$ nonzero $u_3$ we have
\begin{equation}
    |K_2(u_1,u_2,u_3)|=O(p^{-5/2}).
\end{equation}
\end{lemma}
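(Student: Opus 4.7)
The plan is to invoke Theorem \ref{donglisawin-thm} on the exponential sum \eqref{K2} defining $K_2(u_1,u_2,u_3)$. The phase $R_2$ has degree $2$ in $(y_1,y_3,y_4,y_6)$, indivisible by any odd prime $p$, and the leading homogeneous part $G^{(2)} = y_1^2 - y_3^2 - y_4^2 + y_6^2$ of the constraint has gradient $(2y_1,-2y_3,-2y_4,2y_6)$ vanishing only at the origin, so $\{G^{(2)}=0\}$ is a smooth quadric hypersurface in $\mathbb{P}^3$. The first two hypotheses of Theorem \ref{donglisawin-thm} are therefore automatic; the remaining task is to show that for all but $O(1)$ values of $u_3$, the common zero locus $\{G^{(2)}=0\}\cap\{R_2^{(2)}=0\}$ is a smooth codimension-two projective variety.

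First I would simplify the leading piece: expanding the formula for $R_2$ and completing squares yields
\[
R_2^{(2)} = -\bar u_3\,u_2(u_2+u_3)(y_1-y_3)^2 + \bar u_3\,u_1(u_1+u_3)(y_4-y_6)^2 + u_3(y_6^2 - y_3^2).
\]
The decisive observation is that, in the ordered basis $(y_1,y_3,y_4,y_6)$, both $G^{(2)}$ and $R_2^{(2)}$ are block-diagonal: a $2\times 2$ block on $\{y_1,y_3\}$ and a $2\times 2$ block on $\{y_4,y_6\}$, with no cross-coupling. Invoking the classical criterion for a complete intersection of two quadrics in $\mathbb{P}^3$ to be smooth of codimension $2$ — namely that the pencil discriminant $\det(\lambda M_G+\mu M_{R_2^{(2)}})$, a binary form of degree $4$, have four distinct projective roots and no cone vertex of a singular pencil member lie on the base locus — I would then compute the discriminant using the block structure. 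The result is the factorization (up to sign)
\[
\bigl(\lambda^2+u_3\lambda\mu-u_2(u_2+u_3)\mu^2\bigr)\bigl(\lambda^2+u_3\lambda\mu-u_1(u_1+u_3)\mu^2\bigr).
\]

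Using $u_3^2+4u_j(u_j+u_3)=(u_3+2u_j)^2$, the four roots in $\lambda/\mu$ are $\{u_1,u_2,-(u_3+u_1),-(u_3+u_2)\}$, which fail to be distinct only if (i) $u_3=-2u_j$ for some $j\in\{1,2\}$, making a factor degenerate, or (ii) $u_3=-(u_1+u_2)$, forcing a root of one factor to coincide with a root of the other; the option $u_1=u_2$ is excluded by hypothesis. A short separate computation — for instance, the cone vertex of the singular member at $\lambda/\mu=u_2$ lies in Block~1 at $(u_2+u_3,u_2,0,0)$ and satisfies $G^{(2)}(v)=-u_3(u_3+2u_2)$, with analogous formulas at the other three vertices — confirms that outside the exceptional set $\{-2u_1,-2u_2,-(u_1+u_2)\}$, no cone vertex is annihilated by $G^{(2)}$. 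Hence the intersection is smooth for every nonzero $u_3$ outside this $O(1)$ set, and for such $u_3$ Theorem \ref{donglisawin-thm} gives $\sum_{G=0}e_p(R_2)\ll p^{3/2}$, so $|K_2|\ll p^{-5/2}$. The main difficulty will be the algebraic bookkeeping in the discriminant computation; the block-diagonal structure reduces what would otherwise be a generic $4\times 4$ determinant to two $2\times 2$ determinants, yielding an explicit factorization from which the bad values of $u_3$ can be read off directly.
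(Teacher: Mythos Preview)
Your argument is correct and reaches the same conclusion via Theorem~\ref{donglisawin-thm}, but the route is genuinely different from the paper's. The paper verifies the third hypothesis of Theorem~\ref{donglisawin-thm} by a direct, hands-on rank analysis of the $2\times4$ Jacobian $\bigl[\nabla G_2;\,\nabla(R_2)_2\bigr]$: it writes out both gradients, performs column operations to obtain the matrix \eqref{matrix2}, and then runs a case split on which of $y_1,y_3,y_4,y_6$ vanish, extracting in each case a polynomial constraint on $u_3$ that must hold for the rank to drop. The exceptional set is obtained only implicitly as the zero set of several such polynomials.

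You instead exploit the fact that both $G^{(2)}$ and $R_2^{(2)}$ are quadratic forms and invoke the classical smoothness criterion for a pencil of quadrics: since $G^{(2)}$ is non-degenerate, $\{G^{(2)}=0\}\cap\{R_2^{(2)}=0\}$ is smooth of codimension two precisely when the binary quartic $\det(\lambda M_G+\mu M_{R_2^{(2)}})$ is separable. Your block-diagonal observation reduces this determinant to a product of two $2\times 2$ determinants, from which the four roots $\{u_1,u_2,-(u_1+u_3),-(u_2+u_3)\}$ and hence the explicit exceptional set $\{-2u_1,-2u_2,-(u_1+u_2)\}$ fall out immediately. This is cleaner and more conceptual than the paper's case analysis, and it delivers the bad $u_3$'s explicitly rather than as roots of unspecified auxiliary polynomials. (Your separate cone-vertex check is in fact redundant: once $G^{(2)}$ is smooth, separability of the discriminant already forces every cone vertex off the base locus, by the standard Jacobi-formula argument.) The paper's approach, on the other hand, is more elementary in that it requires no outside input about pencils of quadrics and would adapt more readily if $R_2$ were replaced by a higher-degree phase.
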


\begin{proof}
We follow the argument in \cite{Dong-Li-Sawin} Section 3. Let $(R_2)_2$ be the homogeneous leading term of $R_2$, and let $G_2$ be the homogeneous leading term of $G$ so that 
\begin{equation}
    \nabla G_2=2(y_1,\, -y_3 ,\, -y_4 ,\, y_6),
\end{equation}
and 
\begin{align}
    \nabla(R_2)_2=\,&2\big(u_2\bar u_3(u_2+u_3)(y_3-y_1)\,,\\ \nonumber
    &u_2\bar u_3(u_2+u_3)y_1+(u_2-(u_2+u_3)^2\bar u_3)y_3 \,,\\  \nonumber
    &u_1\bar u_3(u_1+u_3)(y_4-y_6) \,,\\ \nonumber
    &-u_1\bar u_3(u_1+u_3)y_4-(u_1-(u_1+u_3)^2\bar u_3)y_6 \big).
\end{align}
Fix $(u_1,u_2)$. By Theorem \ref{donglisawin-thm}, it suffices to show that for all by $O(1)$ $u_3$ the matrix $\text{rank}([\nabla G_2]^T, [\nabla(R_2)_2]^T)^T$ has full rank.

Suppose $y_1y_3y_4y_6\not=0$. If $\text{rank}([\nabla G_2]^T, [\nabla(R_2)_2]^T)^T=1$, then the rank of the following matrix
\begin{equation}
\label{matrix2}
    \left(\begin{array}{cccc} y_1 & y_1-y_3 & -y_4 & y_6-y_4\\
    u_2(u_2+u_3)(y_3-y_1) & -u_3^2y_3 & u_1(u_1+u_3)(y_4-y_6) & u_3^2y_6
    \end{array}\right)
\end{equation}
is also 1. Hence the second and the fourth columns yield $y_1/y_3=y_4/y_6$. This together with the first and the third columns give
\begin{equation}
    (u_2(u_2+u_3)-u_1(u_1+u_3))(y_3/y_1-1)=0.
\end{equation}
If $y_3=y_1$, then the first entry of the second column is 0, and hence all $y_1, y_4, (y_6-y_4)$ are zero, which contradicts the assumption $y_1y_3y_4y_6\not=0$. Thus we must have $u_2(u_2+u_3)-u_1(u_1+u_3)=(u_2-u_1)(u_1+u_2+u_3)=0$, which implies $u_1+u_2+u_3=0$ since $u_1\not=u_2$. This proves the lemma when $y_1y_3y_4y_6\not=0$.

\smallskip

If more than two of $\{y_1, y_3, y_4, y_6\}$ are zero, then the Lemma is clearly true. By symmetry, suppose $y_4y_6=0$. If $y_6=0$, then the first and third columns, and the second and third columns of \eqref{matrix2} give
\begin{equation}
    y_1/(y_3-y_1)=u_2(u_2+u_3)/u_1(u_1+u_3), \hspace{.5cm}y_3/(y_3-y_1)=u_1(u_1+u_3)/u_3^2.
\end{equation}
Thus $u_1(u_1+u_3)/u_3^2-u_2(u_2+u_3)/u_1(u_1+u_3)=1$, implying the lemma.

If $y_4=0$, then $\text{rank}([\nabla G_2]^T, [\nabla(R_2)_2]^T)^T=1$ implies the rank of the following matrix 
\begin{equation}
    \left(\begin{array}{ccc} y_1 & y_1-y_3  & y_6\\
    u_2(u_2+u_3)(y_3-y_1) & -u_3^2y_3 & (u_1^2+u_1u_3+u_3^2)y_6
    \end{array}\right)
\end{equation}
has rank 1. Argue similarly as above to get $u_1(u_1+u_3)/u_3^2-u_2(u_2+u_3)/u_1(u_1+u_3)=0$, which implies the lemma again. \qedhere

\end{proof}

Now let us move back to \eqref{lambda2-simplified}. For fixed $u_1,u_2$, let $E(u_1,u_2)$ be the exceptional set of $u_3$ in Lemma \ref{exponential-sum-lem2}, so $|E(u_1,u_2)|=O(1)$. Write
\begin{align}
\label{K2-case1}
    \eqref{lambda2-simplified}=&\sum_{\substack{(u_1-u_2)u_3\not=0,\\u_3\not\in E(u_1,u_2)}}f(u_1)\overline{f(u_2)}\overline{f(u_1+u_3)}f(u_2+u_3)K_2(u_1,u_2,u_3)\\ \label{K2-case2}
    &+\sum_{\substack{(u_1-u_2)u_3\not=0,\\u_3\in E(u_1,u_2)}}f(u_1)\overline{f(u_2)}\overline{f(u_1+u_3)}f(u_2+u_3)K_2(u_1,u_2,u_3)
\end{align}
By Lemma \ref{exponential-sum-lem2}, 
\begin{equation}
    |\eqref{K2-case1}|\leq p^{-5/2}\sum_{u_3}\Big|\sum_{u_1}f(u_1)f(u_1+u_3)\Big|^2\leq p^{-3/2}\|f\|_2^4.
\end{equation}

As for \eqref{K2-case2}, rewrite it as (recall \eqref{H2} and the change of variables we made before: $u_1=x_4$, $u_2=x_2$, $u_3=x_3-x_1$, and $u_4=x_1$) $\eqref{K2-case2}=$
\begin{equation}
\label{K2-case2-rewrite}
    \sum_{\substack{(u_1-u_2)u_3\not=0,\\u_3\in E(u_1,u_2)}}\sum_{u_4}f(u_1)\overline{f(u_2)}\overline{f(u_1+u_3)}f(u_2+u_3)H_2(u_4,u_2,u_3+u_4,u_1).
\end{equation}
Lemma \ref{Exponential-1} state that $|H_2|\ll p^{-3}$ if $u_3(u_2+u_3+u_4)(u_2-u_4)\not=0$. For fixed $(u_1,u_2)$ and $u_3\in E(u_1,u_2)$, there are $O(1)$ $u_4$ that fails $u_3(u_2+u_3+u_4)(u_2-u_4)\not=0$. Denote this set of $u_4$ by $E'(u_1,u_2, u_3)$. Hence
\begin{align}
\nonumber
    \eqref{K2-case2-rewrite}=&\sum_{u_1, u_2}\sum_{\substack{u_3\in E(u_1,u_2),\\u_4\not\in E'(u_1,u_2, u_3)}}f(u_1)\overline{f(u_2)}\overline{f(u_1+u_3)}f(u_2+u_3)H_2(u_4,u_2,u_3+u_4,u_1)\\ \nonumber
    +&\sum_{u_1, u_2}\sum_{\substack{u_3\in E(u_1,u_2),\\u_4\in E'(u_1,u_2, u_3)}}f(u_1)\overline{f(u_2)}\overline{f(u_1+u_3)}f(u_2+u_3)H_2(u_4,u_2,u_3+u_4,u_1)\\ \nonumber
    :=&I+II.
\end{align}
Lemma \ref{Exponential-1} implies $|H_2(u_4,u_2,u_3+u_4,u_1)|\ll p^{-3}$ when $u_4\not\in E'(u_1,u_2, u_3)$. Hence
\begin{equation}
    |I|\leq  p^{-3}\sum_{u_4}\|f\|_2^4\leq p^{-2}\|f\|_2^4.
\end{equation}
As for $II$, \eqref{H2-estimate} implies $|H_2(u_4,u_2,u_3+u_4,u_1)|\leq p^{-2}$ when $u_3\not=0$ (which is guaranteed in the summation of $\eqref{K2-case2-rewrite}$). Hence
\begin{equation}
    |II|\leq  p^{-2}\|f\|_2^4.
\end{equation}

Combining the cases above we finally get
\begin{equation}
    |\eqref{lambda2-simplified}|\ll p^{-3/2}\|f\|_2^4.
\end{equation}
Plug this back to \eqref{lambda1-lambda2} to conclude Theorem \ref{main}.

\section{Ending remarks}

\subsection{} We discard the oscillatory information of $K_2$ in Lemma \ref{exponential-sum-lem2}. To obtain further improvement, a natural attempt is to pick up this information, for example, by applying Cauchy-Schwarz on the variables $(u_1,u_2)$ in \eqref{lambda2-simplified} (this is what we did in \eqref{lambda1-simplified}). In this way, we can similarly get a new kernel $K_3$, which, being parallel to \eqref{sum-H2}, is an exponential sum on a codimension-3 variety with 8 variables. If $|K_3|$ has a square-root-cancellation upper bound, then it is likely to get the exponent $p^{-1/4+1/32}$ for \eqref{main-esti}. One may even hope to get the exponent $p^{-1/4+1/2^{n+3}}$ by iterating the process above for $n$ steps (so that there are kernels $K_j$ up to the $n$-th kernel $K_n$). Of course, the result is conditioned on the assumption that every appearing kernel $K_j$ obeys a square-root-cancellation upper bound.

However, we don't know how to get a square-root-cancellation upper bound for even $K_3$. This is essentially the reason why our argument does not generalize to other kernels $K$ with polynomial phase $aQ(y)+bP(y)$ (recall \eqref{K}). When $(Q,P)=(y^2,y)$, it is quadratic, and we can use the explicit expression of Gauss sum in \eqref{Gauss-sum-app}, while there is no known explicit formula for other polynomials, even for $(Q,P)=(y^3,y)$. As a result, we do not know how to get a square-root-cancellation upper bound for the corresponding kernel.

\subsection{}

It is also possible to get a weaker Theorem \ref{main} without using the very strong higher-dimensional exponential sum result, Theorem \ref{donglisawin-thm}. Indeed, we first make the change of variables $s-n_1\to n_1$ and $s-n_2\to n_2$, then expand the kernel $K$ and express $f_1$ by Fourier transform so that $\|T( f_1, f_2)\|_2^2$ equals to
\begin{align*}
    &p^{-4} \sum_{n_1,n_2} f_1(n_1)\overline{f_1(n_2)} \sum_{\substack{y_1,y_2\\ z_1, z_2 \\ s}} \hat f_2(y_1)\overline{\hat f_2(y_2)}e_p(-(s-n_1)y_1)e_p(y_2(s-n_2)) \\
    & \hspace{4.5cm} \times e_p((s-n_1)z_1+n_1z_1^2) e_p(-(s-n_2)z_2-n_2z_2^2)\\
    &=p^{-4} \sum_{n_1,n_2} f_1(n_1)\overline{f_1(n_2)} \sum_{\substack{y_1,y_2\\ z_1, z_2}}\hat f_2(y_1)\overline{\hat f_2(y_2)} e_p(-(s-n_1)y_1)e_p(y_2(s-n_2))\\
    & \hspace{4.5cm} \times e_p(-n_1z_1+n_1z_1^2) e_p(n_2z_2-n_2z_2^2)\\
    & \hspace{4.5cm} \times \sum_s e_p(s(y_2-y_1+z_1-z_2)).
\end{align*}
For simplicity, we do not consider the minor terms that come from $n_1=0$ or $n_2=0$ but only focus on the main terms. Observe that this sum over $s$ is non-zero only when $y_2+z_1= y_1+z_2$, in which case it equals $p$. Consequently, we may set $z_2=z_1+y_2-y_1$ and eliminate the sum over $z_2$. Apply the change of variables $x_1=y_1$, $x_2=y_2$,and $x_3=z_1$ to yield
\begin{align*}
    &p^{-3}  \sum_{n_1,n_2} f_1(n_1)\overline{f_2(n_2)} \sum_{x_1,x_2,x_3} \hat f_2(x_1) \overline{\hat f_2(x_2)} e_p(n_1x_1)e_p(-x_2n_2)e_p(-n_1x_3+n_1x_3^2) \\
    & \hspace{5cm} \times e_p(n_2(x_2+x_3-x_1)-n_2(x_2-x_1+x_3)^2).
\end{align*}
Now apply the change of variables $n_1-n_2\to n_1$ and $x_2-x_1 \to x_2$ then gather all terms depending on $x_3$ to get
\begin{align*}
    &p^{-3} \sum_{n_1,n_2} f_1(n_1+n_2) \overline{f_1(n_2)} \sum_{x_1,x_2} \hat f_2(x_1) \overline{\hat f_2(x_1+x_2)} e_p(x_1n_1) e_p(-n_2x_2^2) \\
    &\hspace{5cm} \times \sum_{x_3} e_p(x_3(-n_1-2n_2x_2)+n_1x_3^2).
\end{align*}
Employ \eqref{Gauss-sum} to have that the RHS of above equals to
\begin{align*}
    &p^{-5/2} \sum_{n_1,n_2} f_1(n_1+n_2) \overline{f_1(n_2)} \sum_{x_1,x_2} \hat f_2(x_1) \overline{\hat f_2(x_1+x_2)} e_p(x_1n_1) e_p(-n_2x_2^2) \\
        &\hspace{5cm} \pt{\frac{n_1}{p}}e_p(-\overline{4n_1}(n_1+2n_2x_2)^2)\\
    &=p^{-5/2} \sum_{x_1,x_2} \hat f_2(x_1) \overline{\hat f_2(x_1+x_2)} \sum_{n_1,n_2} f_1(n_1+n_2) \overline{f_1(n_2)} e_p(x_1n_1) e_p(-n_2x_2^2)\\
    &\hspace{5cm} \pt{\frac{n_1}{p}}e_p(-\overline{4}n_1-n_2x_2-\overline{n_1}n_2^2x_2^2)\\
    &\leq\frac{\|f_2\|_2^2}{p^{3/2}}  \Big\| \sum_{n_1,n_2} f_1(n_1+n_2) \overline{f_1(n_2)} e_p(x_1n_1) e_p(-n_2x_2^2) \\  \nonumber
    &\hspace{5cm} \pt{\frac{n_1}{p}}e_p(-\overline{4}n_1-n_2x_2-\overline{n_1}n_2^2x_2^2) \Big\|_{\ell^2_{x_1,x_2}}.
\end{align*}

Expand the $\ell^2_{x_1,x_2}$-norm by labeling the mirrors of $n_1,n_2$ as $n_3,n_4$. This yields that the RHS above is
\begin{align*}
    &\frac{\|f_2\|_2^2}{p^{3/2}}   \Bigg(\sum_{x_1,x_2} \sum_{\substack{n_1,n_2\\ n_3,n_4}} f_1(n_1+n_2) \overline{f_1(n_2)}\overline{ f_1(n_3+n_4)} f_1(n_4)  \pt{\frac{n_1}{p}} \pt{\frac{n_3}{p}} \\
    &\hspace{.5cm} \times e_p(x_1(n_1-n_3))e_p(-\overline{4}(n_1-n_3)-x_2(n_2-n_4))e_p((-n_2-\overline{n}_1n_2^2)x_2^2)\\
    &\hspace{.5cm} \times e_p((\overline{n}_3n_4^2+n_4)x_2^2)
    \Bigg)^{1/2}\\
    =&\frac{\|f_2\|_2^2}{p^{3/2}}\Bigg(\sum_{x_2} \sum_{\substack{n_1,n_2\\ n_3,n_4}} f_1(n_1+n_2) \overline{f_1(n_2)}\overline{ f_1(n_3+n_4)} f_1(n_4)  \pt{\frac{n_1}{p}} \pt{\frac{n_3}{p}} \\
    &\hspace{.5cm} \times e_p(-\overline{4}(n_1-n_3)-x_2(n_2-n_4))e_p((-n_2-\overline{n}_1n_2^2)x_2^2)\\
    &\hspace{.5cm} \times e_p((\overline{n}_3n_4^2+n_4)x_2^2) \sum_{x_1}e_p(x_1(n_1-n_3))
    \Bigg)^{1/2}.
\end{align*}
Once again, we observe that the sum over $x_1$ is non-zero only when $n_1=n_3$ where it equals $p$. Since the square of the Legendre symbol is $1$ (note that $n_1\neq 0$ in these sums) we may cancel them, and gathering the $x_2$ terms together yields the RHS above now equals to
\begin{align*}
    &\frac{\|f_2\|_2^2}{p}  \Bigg( p \sum_{n_1,n_2,n_4} f_1(n_1+n_2) \overline{f_1(n_2)} \overline{f_1(n_1+n_4)} f_1(n_4)  \\
        &\hspace{1cm} \times \sum_{x_2} e_p(-x_2(n_2-n_4)+x_2^2 [\overline{n_1}n_4^2+n_4-\overline{n_1}n_2^2-n_2]) \Bigg)^{1/2}.
\end{align*}
Do the change of variable $x_2(n_2-n_4) \to x_2$, $n_3= n_4$ and then evaluate the quadratic Gauss sum to get 
\begin{align*}
     &\frac{\|f_2\|_2^2}{p^{3/4}} \Bigg(  \sum_{n_1,n_2,n_3} f_1(n_1+n_2) \overline{f_1(n_2)} \overline{f_1(n_1+n_3)} f_1(n_3)  \\
    &\hspace{2cm} \pt{\frac{\overline{n_2-n_3} [\overline{n_1}(n_2+n_3)+1]}{p}} e_p(\overline{4} (n_2-n_3) \overline{[\overline{n_1}(n_2+n_3)+1]}) \Bigg)^{1/2}.
\end{align*}
Finally, we apply Cauchy-Schwarz on the $n_2, n_3$ variables so that the term inside the parentheses is bounded above by 
\begin{align*}
     &\|f_1\|_2^2 \Bigg(  \sum_{n_1,n_2,n_3, n_4} f_1(n_1+n_2) \overline{f_1(n_1+n_3)} \overline{f_1(n_4+n_2)} f_1(n_4+n_3)  \\
    &\hspace{.5cm} \pt{\frac{f(n_1,n_2,n_3)g(n_4,n_2,n_3)}{p}} e_p\big(\overline{4} (n_2-n_3)(g(n_1,n_2,n_3)-g(n_4,n_2,n_3)) \big) \Bigg)^{1/2},
\end{align*}
where $g(a,b,c)=\overline{a}(b+c)+1$. Let $y_1=2n_1+n_2+n_3$, $y_2=n_1+n_2+n_3+n_4$, $y_3=n_3-n_2$, $y_4=n_1$ so that the term inside the parentheses equals to 
\begin{align}
\nonumber
     &\sum_{y_1,y_2,y_3} f_1(\bar{2}(y_1-y_3)) \overline{f_1(\bar{2}(y_1+y_3))} \overline{f_1(y_2-\bar{2}(y_1+y_3))} f_1(y_2+\bar{2}(y_1-y_3))  \\ \label{one-dim-sum}
    &\hspace{.5cm} \sum_{y_4}\pt{\frac{h_1(y_1,y_4)h_2(y_1,y_2,y_4)}{p}} e_p\big(\overline{4} y_3(h_2(y_1,y_2,y_4)-h_1(y_1,y_4)) \big),
\end{align}
where $h_1(y_1,y_4)=\overline{y}_4 (y_1-2y_4)+1$ and $h_2(y_1,y_2,y_4)=\overline{(y_2-y_1+y_4)}(y_1-2y_4)+1$.

Note that in \eqref{one-dim-sum}, $y_3$ appears only linearly in the phase function. Hence one can obtain some estimate for \eqref{one-dim-sum} by an $L^2$ method (though \eqref{one-dim-sum}, the sum of $y_4$, is a one-dimensional character sum, and is $O(p^{1/2})$ except when $y_2=y_3=0$ or $y_1=y_2$ and $y_3=0$). This estimate, together with a real interpolation (estimates for an upper-level set and a lower-level set), will give the weaker exponent $7/40$ for Theorem \ref{main}, which however is still an improvement upon \cite{Dong-Li-Sawin}. 

We hope this approach may have potential in other fields lacking good estimates for exponential sums.

\bibliographystyle{plain}
\bibliography{bibli}

\end{document}